\documentclass{article}  

\usepackage{amsmath}
\usepackage{amsfonts}
\usepackage{amssymb}
\usepackage{amsthm}
\usepackage{mathtools}
\usepackage{mathrsfs}
\usepackage{mathpazo}

\usepackage{graphicx}
\usepackage{enumerate,color}
\usepackage{microtype}
\usepackage{sectsty}
\allsectionsfont{\sffamily}
\usepackage[parfill]{parskip}

\usepackage{glossaries}
\newacronym{lqr}{LQR}{linear-quadratic-regulator}

\usepackage{hyperref}
\usepackage[nameinlink]{cleveref}

\definecolor{lightblue}{rgb}{0.54, 0.81, 0.94}
\hypersetup{
    colorlinks = true,
    linkcolor = lightblue,
    citecolor = lightblue,
    urlcolor = lightblue,
    }
    
\crefformat{equation}{(#2#1#3)}
\crefmultiformat{equation}{(#2#1#3)}%
{ and~(#2#1#3)}{, (#2#1#3)}{ and~(#2#1#3)}

\newcommand{\realR}{\mathbb{R}}
\newcommand{\itemi}{($i$)}
\newcommand{\ii}{($ii$)}
\newcommand{\iii}{($iii$)}
\newcommand{\abs}[1]{\ensuremath{\left\vert{#1}\right\vert}}
\newcommand{\cfunof}[1]{\ensuremath{\left\{#1\right\}}}
\DeclareMathOperator{\diag}{diag}

\newcommand{\funof}[1]{\ensuremath{\left(#1\right)}}

\newcommand{\hermR}{\mathbb{S}}

\DeclareMathOperator{\tr}{tr}
\DeclareMathOperator{\vc}{vec}

\newtheorem{thm}{Theorem}
\newtheorem{lem}{Lemma}
\newtheorem{defn}{Definition}

\begin{document}

\title{Optimal Control on Positive Cones}

\author{Richard Pates and Anders Rantzer\thanks{The authors are with the Department of Automatic Control LTH, Lund
    University, Box 118, SE-221 00 Lund, Sweden (e-mail: richard.pates/anders.rantzer@control.lth.se).The authors are supported by the Excellence Center ELLIIT as well as the Wallenberg AI, Autonomous Systems and Software Program (WASP) and the European Research Council (Advanced Grant 834142).}}

\maketitle

\begin{abstract}
\noindent{}An optimal control problem on finite-dimensional positive cones is stated. Under a critical assumption on the cone, the corresponding Bellman equation is satisfied by a linear function, which can be computed by convex optimization. A separate theorem relates the assumption on the cone to the existence of minimal elements in certain subsets of the dual cone. Three special cases are derived as examples. The first one, where the positive cone is the set of positive semi-definite matrices, reduces to standard linear quadratic control. The second one, where the positive cone is a polyhedron, reduces to a recent result on optimal control of positive systems. The third special case corresponds to linear quadratic control with additional structure, such as spatial invariance.
\end{abstract}


\section*{Notation}

A cone $\mathcal{P}\subseteq{}\realR^n$ is said to be proper if it is convex, closed, pointed and has a non-empty interior. The dual cone of a proper cone $\mathcal{P}$ is also proper, and given by 
\[
\mathcal{P}^*=\cfunof{\lambda{}:\lambda{}^\top{}x\geq{}0\text{ for all }x\in\mathcal{P}}.
\]
All further non-standard notation will be introduced as it is used.

\section{Introduction}

This paper is concerned with optimal control of the form
  \begin{align}
    \text{Minimize}\quad&\sum_{t=0}^\infty[s^\top x(t)+r^\top u(t)]
    \text{ over }\{u(t)\}_{t=0}^\infty\label{eqn:problem}\\ 
    \text{subject to}\quad& x(t+1)=Ax(t)+Bu(t)\notag\\
    &(x(t),u(t))\in\mathcal{P},\quad x(0)=x_0.\notag
  \end{align}
  where $\mathcal{P}$ is a proper cone in $\realR^n$. Assumptions are made to make sure that the objective function is non-negative and thus finite if and only if the Bellman equation
  \begin{align*}
    J^*(x)&=\min_{(x(t),u(t))\in\mathcal{P}}
    \left[s^\top x+r^\top u+J^*(Ax+Bu)\right]
  \end{align*}
  has a non-negative solution $J^*$. The minimizing $u$ gives an optimal control policy. See for example \cite[section~3.1]{bertsekas2007dynamic}.
  
  The setting above turns out be one of the rare situations when the Bellman equation has an explicit solution. Otherwise, the most well known case from the literature is probably linear dynamics and quadratic cost, introduced in the pioneering work of \cite{kalman1960contributions}. Another case of fundamental importance is when both $x$ and $u$ are restricted to finite sets. This setting leads to shortest path problems on graphs, with an extensive literature dating back to the work of \cite{dijkstra1959note}. Recently, a third class of problems with explicit solutions to the Bellman equation was introduced in the context of linear systems on the positive orthant in $\realR^n$ (the so called positive systems), see \cite{rantzer2022explicit,li2023dynamic}. A closely related case with bilinear dynamics is treated in \cite{blanchini2023optimal}.
  
The purpose of this note is to show that several previous instances where the Bellman equation can be solved explicitly are in fact special cases of a more general class of optimal control problems stated in terms of positive cones.

\newpage{}

\section{Main Result}

\begin{thm}\label{thm:main}
  Let $\mathcal{P}$ be a proper cone in $\realR^{n+m}$ with dual cone $\mathcal{P}^*$. Given
  $A\in\realR^{n\times n}$, $B\in\realR^{n\times m}$, suppose that
  for every $(x,u)\in\mathcal{P}$ there exists $v$ such that $(Ax+Bu,v)\in\mathcal{P}$. Let $(s,r)$ be an interior point of $\mathcal{P}^*$. Moreover, assume the existence of 
  $\phi:\realR^m\to\realR^n$ such that
  \begin{align}\label{eq:opt}
    \phi(\mu)^\top x&=\min_{(x,u)\in\mathcal{P}}\mu^\top u
  \end{align}
  for all $(x,v)\in\mathcal{P}$, $(\eta,\mu)\in\mathcal{P}^*$.
  Then the following three statements are equivalent:\\[-1mm]
  \begin{itemize}
    \item[\itemi] The problem \cref{eqn:problem} has a finite value for all $(x_0,u_0)\in\mathcal{P}$.\\
    \item[\ii] There exists a $(\lambda_*,0)\in\mathcal{P}^*$ such that
    \[
      \lambda_*=s+A^\top\lambda_*+\phi(r+B^\top \lambda_*).
    \]
    \item[\iii] The convex program 
    \begin{align*}
      \text{Maximize }&\lambda^\top x_0
      \text{ over }(\lambda,0)\in\mathcal{P}^*\\ 
      \text{subject to }&(s+A^\top\lambda-\lambda,r+B^\top \lambda)\in\mathcal{P}^*.
    \end{align*}
    has a bounded value for every $(x_0,u_0)\in\mathcal{P}$.
  \end{itemize}
  If {\ii} is true, then the minimal value of \itemi{} and the maximal value of {\iii} are both equal to $\lambda_*^\top x_0$. Moreover, the optimal control law $x\mapsto u$ is obtained by minimizing $(r+B^\top \lambda_*)^\top u$ over $u$ subject to the constraint $(x,u)\in\mathcal{P}$.
\end{thm}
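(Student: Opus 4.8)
The plan is to establish the equivalence through the cycle \itemi $\Rightarrow$ \iii $\Rightarrow$ \ii $\Rightarrow$ \itemi, and in addition to prove \ii $\Rightarrow$ \iii so as to pin down the maximal value of the convex program; the finite value of \cref{eqn:problem} and the optimal feedback will drop out of the proof of \ii $\Rightarrow$ \itemi. Two preliminary facts are used throughout. (a) Since $(s,r)$ is interior to $\mathcal{P}^*$, there is a constant $c>0$ with $s^\top x+r^\top u\geq c\|(x,u)\|$ for all $(x,u)\in\mathcal{P}$: minimise the positive continuous function $(x,u)\mapsto s^\top x+r^\top u$ over the compact set $\{(x,u)\in\mathcal{P}:\|(x,u)\|=1\}$ and rescale. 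Consequently every feasible trajectory whose cost $\sum_{t\geq0}[s^\top x(t)+r^\top u(t)]$ is finite satisfies $(x(t),u(t))\to0$, in particular $x(t)\to0$. (b) Write $\Pi$ for the projection of $\mathcal{P}$ onto the $x$-coordinates; then $\Pi$ has nonempty interior (so it spans $\realR^n$) and $\Pi^*=\{\lambda:(\lambda,0)\in\mathcal{P}^*\}$ is closed and pointed. Whenever $(\lambda,0)\in\mathcal{P}^*$, the recursive-feasibility hypothesis together with $(s,r),(\lambda,0)\in\mathcal{P}^*$ gives $(s+A^\top\lambda,r+B^\top\lambda)\in\mathcal{P}^*$, so $r+B^\top\lambda$ is an admissible second component in \cref{eq:opt}; hence $\phi(r+B^\top\lambda)^\top x=\min\{(r+B^\top\lambda)^\top u:(x,u)\in\mathcal{P}\}$ for all $x\in\Pi$, and $(-\phi(r+B^\top\lambda),\,r+B^\top\lambda)\in\mathcal{P}^*$.

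For \ii $\Rightarrow$ \itemi: substituting $J^*(x)=\lambda_*^\top x$ into the Bellman equation, the inner minimisation collapses by \cref{eq:opt} to $(s+A^\top\lambda_*+\phi(r+B^\top\lambda_*))^\top x$, which equals $\lambda_*^\top x$ by the equation in \ii, so $\lambda_*^\top x$ solves the Bellman equation, and it is nonnegative on $\Pi$ because $(\lambda_*,0)\in\mathcal{P}^*$. For any feasible trajectory the Bellman inequality $\lambda_*^\top x(t)\leq s^\top x(t)+r^\top u(t)+\lambda_*^\top x(t+1)$ (valid since $(x(t),u(t))\in\mathcal{P}$) sums to $\sum_{t=0}^T[s^\top x(t)+r^\top u(t)]\geq\lambda_*^\top x_0-\lambda_*^\top x(T+1)$; if the cost is infinite the bound $\geq\lambda_*^\top x_0$ is trivial, and if it is finite then $x(T+1)\to0$ by (a), so the cost is $\geq\lambda_*^\top x_0$ in every case. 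Conversely, let the feedback $x\mapsto u^*(x)$ select a minimiser of $(r+B^\top\lambda_*)^\top u$ over $\{u:(x,u)\in\mathcal{P}\}$ (which exists for $x\in\Pi$ by \cref{eq:opt}); recursive feasibility keeps the closed-loop trajectory in $\Pi$, and along it $s^\top x(t)+r^\top u^*(x(t))=\lambda_*^\top x(t)-\lambda_*^\top x(t+1)$, so its partial sums are nondecreasing and bounded above by $\lambda_*^\top x_0$; hence the cost is finite, $x(t)\to0$, and the cost equals $\lambda_*^\top x_0$. This proves \itemi, gives the optimal value $\lambda_*^\top x_0$, and identifies the optimal control law.

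For the remaining implications, set $T(\lambda):=s+A^\top\lambda+\phi(r+B^\top\lambda)$ and order $\realR^n$ by $\lambda\preceq\lambda'\iff\lambda'-\lambda\in\Pi^*$. The feasible set $\mathcal{F}$ of the program in \iii is closed (a preimage of $\mathcal{P}^*\times\mathcal{P}^*$ under an affine map) and, using (b) and $(s+A^\top\lambda-\lambda,\,r+B^\top\lambda)=(T(\lambda)-\lambda,0)+(-\phi(r+B^\top\lambda),\,r+B^\top\lambda)$, equals $\{\lambda:0\preceq\lambda\preceq T(\lambda)\}$; it contains $0$ since $(s,r)\in\mathcal{P}^*$. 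A short estimate — bounding $\phi(r+B^\top\lambda')^\top x$ below by $\phi(r+B^\top\lambda)^\top x+\phi(B^\top(\lambda'-\lambda))^\top x$ and using $(A^\top(\lambda'-\lambda),B^\top(\lambda'-\lambda))\in\mathcal{P}^*$ — shows $T$ is $\preceq$-monotone on $\{\lambda:(\lambda,0)\in\mathcal{P}^*\}$. Then \ii $\Rightarrow$ \iii: $\lambda_*\in\mathcal{F}$ because $T(\lambda_*)=\lambda_*$, and evaluating the constraint defining $\mathcal{F}$ on the states and controls of the optimal closed-loop trajectory and telescoping gives $\lambda^\top x_0\leq\lambda_*^\top x_0$ for all $\lambda\in\mathcal{F}$, so the program is bounded with maximum $\lambda_*^\top x_0$. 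And \iii $\Rightarrow$ \ii: if $\mathcal{F}$ had a nonzero recession direction $d$, then $(d,0)\in\mathcal{P}^*$, so $d^\top x_0>0$ for some $x_0\in\Pi$ and the program is unbounded there, contradicting \iii; hence $\mathcal{F}$ is compact, a maximiser $\lambda_*$ of $\lambda^\top x_0$ over $\mathcal{F}$ exists for any $x_0$ in the interior of $\Pi$, and if $T(\lambda_*)\neq\lambda_*$ then by monotonicity $T(\lambda_*)\in\mathcal{F}$ while $(T(\lambda_*)-\lambda_*)^\top x_0>0$ (a nonzero element of $\Pi^*$ is strictly positive on the interior of $\Pi$), contradicting maximality; thus $T(\lambda_*)=\lambda_*$ and $(\lambda_*,0)\in\mathcal{P}^*$, which is \ii.

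Finally, \itemi $\Rightarrow$ \iii: if $\mathcal{F}$ has a nonzero recession direction $d$, then $(A^\top d-d,B^\top d)\in\mathcal{P}^*$, so $d^\top x(t+1)\geq d^\top x(t)$ along every feasible trajectory and hence $d^\top x(t)\geq d^\top x_0$; choosing $x_0\in\Pi$ with $d^\top x_0>0$, fact (a) gives $s^\top x(t)+r^\top u(t)\geq c\|x(t)\|\geq c\,d^\top x_0/\|d\|>0$ for all $t$, so every feasible trajectory from that $x_0$ has infinite cost, contradicting \itemi; thus $\mathcal{F}$, being closed, nonempty and with trivial recession cone, is bounded, and \iii holds. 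The step I expect to be the main obstacle is the reformulation of $\mathcal{F}$ together with the monotonicity of $T$: both rely on applying \cref{eq:opt} only where $r+B^\top\lambda$ is an admissible second component, which is exactly what fact (b) secures. With that in hand, \iii $\Rightarrow$ \ii is the short maximiser argument above, and the remaining implications are routine telescoping and recession-cone reasoning.
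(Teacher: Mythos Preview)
Your argument is correct and, in several places, cleaner than the paper's own proof. The paper centres everything on the value iteration $\lambda_{k+1}=s+A^\top\lambda_k+\phi(r+B^\top\lambda_k)$ and the associated Bellman recursion $J_k(x)=\lambda_k^\top x$: \itemi{}$\Leftrightarrow$\ii{} is obtained by running this iteration from $\lambda_0=0$ and showing monotone convergence to a fixed point; \iii$\Rightarrow$\ii{} is the same iteration, this time bounded because each $\lambda_k$ is feasible for the program in \iii{}; and \ii$\Rightarrow$\iii{} invokes an external estimate (\cite[Proposition~2]{rantzer2006relaxed}) to dominate the iteration started from an arbitrary feasible $\lambda$. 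You avoid the iteration entirely: \ii$\Rightarrow$\itemi{} is a direct Bellman verification plus telescoping, \ii$\Rightarrow$\iii{} is telescoping along the optimal closed-loop trajectory (so no external reference is needed), and \iii$\Rightarrow$\ii{} is a compactness/monotone-map argument on the feasible set $\mathcal{F}=\{\lambda:0\preceq\lambda\preceq T(\lambda)\}$. Your extra implication \itemi$\Rightarrow$\iii{} via recession directions is also new relative to the paper. The trade-off is that the paper's value-iteration viewpoint makes the connection to dynamic programming explicit and gives a constructive sequence converging to $\lambda_*$, whereas your route is shorter, self-contained, and isolates the structural facts (the reformulation of $\mathcal{F}$ and the $\preceq$-monotonicity of $T$) that really drive the equivalence. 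Your remark that monotonicity of $T$ is the delicate point is apt; note that it can be obtained even more directly by writing $T(\lambda)^\top x=\min_{(x,u)\in\mathcal{P}}[s^\top x+r^\top u+\lambda^\top(Ax+Bu)]$ and using $(\lambda'-\lambda,0)\in\mathcal{P}^*$ together with recursive feasibility, which sidesteps the superadditivity estimate you sketch.
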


\begin{proof}
  Given $(\lambda_0,0)\in\mathcal{P}^*$, consider the Bellman recursion 
  \begin{align*}
    J_{k+1}(x)&=\min_{x,u)\in\mathcal{P}}\left[s^\top x+r^\top u+J_k(Ax+Bu)\right],\\
    J_0(x)&=\lambda_0^\top x
  \end{align*}
  and the sequence $\{\lambda_k\}_{k=0}^\infty$ generated by
  \begin{align}
    \lambda_{k+1}&=s+A^\top \lambda_k+\phi(r+B^\top\lambda_k).
    \label{eqn:lambdaiter}
  \end{align}
  We will prove by induction that this implies 
  $J_k(x)=\lambda_k^\top x$ for all $k$:
  Assume for some $k\ge0$ that 
  $J_k(x)=\lambda_k^\top x$ and $(\lambda_k,0)\in\mathcal{P}^*$.  
  Let $(\eta_k,\mu_k):=(s+A^\top \lambda_k,r+B^\top\lambda_k)$. Then
  \begin{align*}
    \eta_k^\top x+\mu_k^\top u&=s^\top x+r^\top u+\lambda_k^\top(Ax+Bu)\ge0
  \end{align*}
  for all $(x,u)\in\mathcal{P}$, so $(\eta_k,\mu_k)\in\mathcal{P}^*$. 
  Hence
  \begin{align*}
    J_{k+1}(x)&=\min_{x,u)\in\mathcal{P}}\left[s^\top x+r^\top u+J_k(Ax+Bu)\right]\\
    &=\min_{(x,u)\in\mathcal{P}}\left[s^\top x+r^\top u+\lambda_k^\top(Ax+Bu)\right]\\
    &=s^\top x+\lambda_k^\top Ax+\min_{(x,u)\in\mathcal{P}}\left[\mu_k^\top u\right]\\
    &=\left[s+A^\top \lambda_k+\phi(\mu_k)\right]^\top x
    =\lambda_{k+1}^\top x
  \end{align*}
  and $\lambda_{k+1}^\top x\ge 0$, so $(\lambda_{k+1},0)\in\mathcal{P}^*$ and the induction assumption holds also for $k+1$. The assumption is trivial for $k=0$, so it follows by induction for $k=1,2,3,\ldots$
  
  \ii{} $\Longleftrightarrow$ \itemi: If \itemi{} holds, then the Bellman recursion from $J_0=0$ gives $0=J_0\le J_1\le J_2\le\ldots$ with upper limit $\lambda_*^\top x$ where $\lambda_*$ satisfies the condition in {\ii}. Conversely, if {\ii}, 
  then $\lambda_0=0$ gives $(\lambda_*-\lambda_k,0)\in\mathcal{P}^*$ for all $k$. This is trivial for $k=0$ and follows by induction for all other $k$ since 
  \begin{align*}
    \lambda_{k+1}^\top x
    &=\min_{(x,u)\in\mathcal{P}}\left[s^\top x+r^\top u+\lambda_k^\top(Ax+Bu)\right]\\
    &\le\min_{(x,u)\in\mathcal{P}}\left[s^\top x+r^\top u+\lambda_*^\top(Ax+Bu)\right]
    =\lambda_*^\top x.
  \end{align*}
  Hence the $J_k$-sequence has an upper limit and \itemi{} holds too. This proves the implication from {\ii} to \itemi{}, as well as the formula $\lambda_*^\top x_0$ for the minimal value in \itemi{}. Given $x$, the optimal control is $u$ is obtained by the minimization
  \begin{align*}
    \arg\min_{(x,u)\in\mathcal{P}}\left[s^\top x+r^\top u+\lambda_*^\top(Ax+Bu)\right]\
  \end{align*}
  or equivalently $\arg\min_{(x,u)\in\mathcal{P}}\left[(r+B^\top \lambda_*)^\top u\right]$.
  
  \ii$\implies$\iii: Consider $\lambda_*$ satisfying {\ii} and let $\lambda$ be a feasible point for the optimization problem in \iii. In particular,
  \begin{align}
    \lambda^\top x&\le s^\top x+r^\top u+\lambda^\top(Ax+Bu)
    &&\text{for }(x,u)\in\mathcal{P}.
  \label{eqn:Bineq}
  \end{align}
  The assumption that $(s,r)$ is an interior point of $\mathcal{P}^*$ gives existence of $\gamma,\delta>0$ such that 
  \begin{align*}
    \lambda_*^\top(Ax+Bu)&\le\gamma(s^\top x+r^\top u)\\
    \lambda^\top x&\le\delta s^\top x\le\delta \lambda_*^\top x&
    \text{for }(x,u)\in\mathcal{P}.
  \end{align*}
  Let $\{\lambda_k\}_{k=0}^\infty$ be the sequence generated by \cref{eqn:lambdaiter} starting from $\lambda_0=\lambda$.
  Combination of \cref{eqn:lambdaiter} and \cref{eqn:Bineq} gives $(\lambda_k-\lambda,0)\in\mathcal{P}^*$ for all $k$. Moreover, 
  \cite[Proposition~2]{rantzer2006relaxed} implies that
  \begin{align*}
    0&\le \lambda_k^\top x\le\left[1+\frac{\delta-1}{(1+\gamma^{-1})^k}\right]\lambda_*^\top x
    &&\text{for all }k,
  \end{align*}
  so in the limit $(\lambda_*-\lambda,0)\in\mathcal{P}$ and $\lambda_*^\top x$ an upper bound for the value of the convex program in \iii.

  \iii$\implies$\ii: Consider the sequence defined by \cref{eqn:lambdaiter} initialized with $\lambda_0=0$. Then
  \begin{align*}
    \lambda_k^\top x&\le \lambda_{k+1}^\top x
    = s^\top x+r^\top u+\lambda_k^\top(Ax+Bu),
  \end{align*}
  so $\lambda_k$ is a feasible point for the optimization problem in \iii{}. Hence \iii{} implies that the sequence $\lambda_1^\top x\le\lambda_2^\top x\le\ldots$ is bounded for every $x$. Therefore the limit $\lambda_*=\lim_{k\to\infty}\lambda_k$ exists and satisfies \ii{}. The proof is complete.
\end{proof}

\Cref{thm:main} supposes the existence of a function $\phi$ with the key property that it renders the optimization in \cref{eq:opt} linear in $x$. It is this feature that keeps the value iteration linear in $x$, which is instrumental in allowing the recasting of the optimal control problem as either a fixed point equation or convex program (conditions \ii{} and \iii{} in \Cref{thm:main}). It is natural then to ask when such a $\phi$ exists. The following result shows that this can be established in a rather direct way through a property of the dual cone $\mathcal{P}^*$. We will use this result in the next section when we connect \Cref{thm:main} to some well studied optimal control problems.

\begin{thm}\label{thm:minpoint}
Let $\mathcal{P}$ be a proper cone in $\realR^{n+m}$ with dual cone $\mathcal{P}^*$. For any $\mu\in\realR^m$ such that the set
\begin{equation}\label{eq:slater}
\mathcal{C}_\mu=\cfunof{\lambda:\funof{\lambda,\mu}\in\mathcal{P}^*}
\end{equation}
has a non-empty interior, the following are equivalent:
\begin{enumerate}
\item[\itemi] 
The function
\begin{equation}\label{eq:cone1}
x\mapsto\inf_{\funof{x,u}\in\mathcal{P}}\mu^{\top}u
\end{equation}
with domain $\cfunof{x:\funof{x,v}\in\mathcal{P}\text{ for some }v}$ is a linear form (that is, the function above takes the form $f(x)=\phi^{\top}x$ for some $\phi\in\realR^n$).
\item[\ii] The set $\mathcal{C}_\mu$ has a minimum element (that is, there exists a $\bar{\lambda}\in\mathcal{C}_\mu$ such that $\lambda\in\mathcal{C}_\mu\implies\funof{\lambda-\bar{\lambda},0}\in\mathcal{P}^*$).
\end{enumerate}
Furthermore, if $\bar{\lambda}$ is the minimum element in \ii{}, then the linear form in \itemi{} is $f(x)=-\bar{\lambda}^{\top}x$.
\end{thm}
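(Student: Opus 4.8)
The starting point is the value function $g(x)=\inf_{(x,u)\in\mathcal{P}}\mu^{\top}u$, viewed on the stated domain $D=\cfunof{x:(x,v)\in\mathcal{P}\text{ for some }v}$. Since $D$ is the image of the convex cone $\mathcal{P}$ under the projection $(x,u)\mapsto x$, it is itself a convex cone, and $g$ is positively homogeneous and convex (it is the partial infimum in $u$ of the linear functional $(x,u)\mapsto\mu^{\top}u$ over the convex set $\mathcal{P}$). The identity on which everything rests is obtained by unwinding the definitions of $\mathcal{P}^*$ and $\mathcal{C}_\mu$:
\[
-\mathcal{C}_\mu=\cfunof{\lambda\in\realR^n:\lambda^{\top}x\leq g(x)\text{ for all }x\in D},
\]
that is, $-\mathcal{C}_\mu$ is precisely the set of linear minorants of $g$ on $D$. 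Two consequences will be used repeatedly: $g(x)\geq-\lambda^{\top}x$ on $D$ for every $\lambda\in\mathcal{C}_\mu$ (weak duality), and the recession cone of $\mathcal{C}_\mu$ is $D^*=\cfunof{\lambda:(\lambda,0)\in\mathcal{P}^*}$, the dual cone of $D$.

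The implication \itemi{}$\Rightarrow$\ii{} is immediate from this identity, and uses neither the hypothesis on $\interior\mathcal{C}_\mu$ nor any duality theorem: if $g(x)=\phi^{\top}x$ on $D$, then $\phi$ is a linear minorant of $g$, i.e.\ $-\phi\in\mathcal{C}_\mu$; and for any $\lambda\in\mathcal{C}_\mu$ the bound $-\lambda^{\top}x\leq g(x)=\phi^{\top}x$ on $D$ is exactly $(\lambda+\phi)^{\top}x\geq0$ on $D$, i.e.\ $(\lambda+\phi,0)\in\mathcal{P}^*$. Hence $\bar\lambda:=-\phi$ is a minimum element of $\mathcal{C}_\mu$, which also yields the claimed formula $f(x)=-\bar\lambda^{\top}x$.

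For the converse \ii{}$\Rightarrow$\itemi{}, let $\bar\lambda$ be the minimum element. From the minimum‑element property and the identity above one gets $\mathcal{C}_\mu=\bar\lambda+D^*$; since $\interior\mathcal{C}_\mu\neq\emptyset$, this forces $D^*$ to be full‑dimensional, so $\overline{D}$ is a proper cone, and taking support functions gives $\sup_{\lambda\in\mathcal{C}_\mu}(-\lambda^{\top}x)=-\bar\lambda^{\top}x$ for all $x\in\overline{D}$. Combined with weak duality this already yields $g(x)\geq-\bar\lambda^{\top}x$ on $D$, so the real content of \itemi{} is the reverse inequality $g(x)\leq-\bar\lambda^{\top}x$ — in other words, strong duality for the conic program defining $g(x)$. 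For $x$ in the interior of $D$ this is automatic, since a convex function coincides with its biconjugate on the relative interior of its domain, and the biconjugate of $g$ is $\sup_{\lambda\in\mathcal{C}_\mu}(-\lambda^{\top}x)=-\bar\lambda^{\top}x$. The remaining points are $x_0\in D\cap\partial\overline{D}$, and the plan there is a separation argument against the convex cone $\mathcal{Q}=\cfunof{(x,\mu^{\top}u+s):(x,u)\in\mathcal{P},\ s\geq0}$: a failure $g(x_0)>-\bar\lambda^{\top}x_0$ would place a point $(x_0,-\bar\lambda^{\top}x_0+\varepsilon)$ outside $\mathcal{Q}$, and a separating functional, normalized in its last coordinate, would produce some $\lambda\in\mathcal{C}_\mu$ with $\lambda^{\top}x_0<\bar\lambda^{\top}x_0$, contradicting $\lambda-\bar\lambda\in D^*$.

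The step I expect to be the main obstacle is exactly this passage to the boundary of the domain: $\mathcal{Q}$ need not be closed, so the separation must be carried out against $\overline{\mathcal{Q}}$, and one must exclude a genuine duality gap on $\partial\overline{D}$. This is where closedness of $\mathcal{P}$ and the Slater‑type hypothesis $\interior\mathcal{C}_\mu\neq\emptyset$ have to be used in tandem: one approaches $x_0$ along interior directions $x_0+\theta z$ with $z\in\interior D$, where $g$ is already known to equal $-\bar\lambda^{\top}(x_0+\theta z)$, selects corresponding near‑optimal controls $u_\theta$ of least norm, and uses closedness of $\mathcal{P}$ to extract a limit as $\theta\downarrow0$ — the interior hypothesis being what should keep the $u_\theta$ from escaping to infinity. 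Once $g(x)=-\bar\lambda^{\top}x$ has been established on all of $D$, both \itemi{} and the final formula for the linear form follow at once.
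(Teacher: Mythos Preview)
Your \itemi$\Rightarrow$\ii{} argument is essentially the paper's: the identification of $-\mathcal{C}_\mu$ with the set of linear minorants of $g$ is exactly the content of the paper's contradiction argument that $(\bar\lambda,\mu)\in\mathcal{P}^*$ together with its invocation of weak duality.

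For \ii$\Rightarrow$\itemi{} you take a substantially harder road than the paper. The paper simply recognizes \cref{eq:cone1} and \cref{eq:cone2} as a primal/dual pair of conic programs in standard form (it cites \cite[p.~266]{BV04}), reads the hypothesis $\interior\mathcal{C}_\mu\neq\emptyset$ as strict feasibility of the dual, and invokes the off-the-shelf strong-duality theorem to conclude that the primal value equals $-\bar\lambda^{\top}x$ for every $x$ in the domain in one stroke. Your route instead tries to re-derive that strong-duality statement from first principles: biconjugates on the interior of $D$, then a separation or limiting argument on the boundary. The interior part is fine, but the boundary part is exactly where all the work in conic strong duality lives, and your sketch does not close it --- the separation against $\mathcal{Q}$ can produce a vertical hyperplane (last coordinate zero), and the limiting argument needs a uniform bound on the $u_\theta$ that you have not established from $\interior\mathcal{C}_\mu\neq\emptyset$. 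None of this is wrong in spirit, but it is re-proving a standard theorem rather than citing it; the paper's approach buys a three-line proof, while yours would buy self-containment only once the boundary case is actually finished.
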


\begin{proof}
\itemi$\implies$\ii: Under the hypothesis of \itemi{}, there exists a $\bar{\lambda}$ such that the function in \cref{eq:cone1} takes the form $f\funof{x}=-\bar{\lambda}^\top{}x$. We will first establish that $
\funof{\bar{\lambda},\mu}\in\mathcal{P}^*$. Assume the converse. There therefore exists an
\[
\funof{\bar{x},\bar{u}}\in\mathcal{P}\;\text{such that}\;\begin{bmatrix}\bar{\lambda}\\\mu\end{bmatrix}^{\top}\begin{bmatrix}\bar{x}\\\bar{u}\end{bmatrix}<0,
\]
which implies that $-\bar{\lambda}^{\top}\bar{x}>\mu^{\top}\bar{u}$. However from \cref{eq:cone1} we see that $-\bar{\lambda}^{\top{}}\bar{x}\leq{}\mu^{\top}\bar{u}$, which is a contradiction. We will now show that $\bar{\lambda}$ is the minimum element of $\mathcal{C}_\mu$. The optimization in \cref{eq:cone1} is a cone program on standard form, with dual program (see for example \cite[p.266]{BV04})
\begin{equation}\label{eq:cone2}
\sup_{\funof{\lambda,\mu}\in\mathcal{P}^*}-\lambda^{\top}x.
\end{equation}
By weak duality this implies that
\[
\inf_{\funof{x,u}\in\mathcal{P}}\mu^{\top}u\geq{}\sup_{\funof{\lambda,\mu}\in\mathcal{P}^*}-\lambda^{\top}x,
\]
and so for any $\funof{x,u}\in\mathcal{P}$,
\begin{equation}\label{eq:contcont1}
\funof{\lambda,\mu}\in\mathcal{P}^*\implies\bar{\lambda}^{\top{}}x\leq{}\lambda^{\top}x.   
\end{equation}
This implies that $\funof{\lambda-\bar{\lambda},0}\in\mathcal{P}^*$, which proves the claim.

\ii$\implies$\itemi{}: For all $\funof{x,u}\in\mathcal{P}$,
\[
\begin{bmatrix}\lambda-\bar{\lambda}\\0\end{bmatrix}^{\top}\begin{bmatrix}x\\u\end{bmatrix}\geq{}0,\implies-\funof{\lambda-\bar{\lambda}}^{\top}x\leq{}0.
\]
Therefore for any such $\lambda$,
\[
-\lambda^{\top}x=-\funof{\lambda-\bar{\lambda}}^{\top}x-\bar{\lambda}^{\top{}}x\leq{}-\bar{\lambda}^{\top{}}x.
\]
It then follows that the solution to \cref{eq:cone2} equals $-\bar{\lambda}^{\top{}}x$. Condition \cref{eq:slater} ensures that \cref{eq:cone2} is strictly feasible. Therefore strong duality holds, which proves \cref{eq:cone1} as required.
\end{proof}

\newpage{}

\section{Examples}

\subsection{The Linear Quadratic Regulator}\label{sec:lqr}

In this example we will connect the standard \gls{lqr} problem to  \Cref{thm:main}. This will give an interpretation of our more abstract notation and results in a hopefully more familiar setting. In particular we will show that when the optimization in \itemi{} corresponds to the \gls{lqr} problem, conditions \ii{} and \iii{} reduce to the algebraic Riccati equations and semi-definite programs that are standard in the solution to this problem.

We will in fact study a slightly more general problem. The connections to the standard \gls{lqr} will be established at the end of the section. The optimization problem that we consider first is:
\begin{align*}     \text{Minimize}\quad&\sum_{t=0}^\infty\tr\funof{\begin{bmatrix}
        S&R_1\\R_1^\top{}&R_2
    \end{bmatrix}\begin{bmatrix}
    X(t)& U_1(t)\\U_1(t)^\top{}&U_2\funof{t}
\end{bmatrix}}\\
      \text{over}\quad&\{U_1(t),U_2(t)\}_{t=0}^\infty,\\       
\text{subject to}\quad& X(t+1) = \begin{bmatrix}F&G\end{bmatrix}
\begin{bmatrix}
    X(t)& U_1(t)\\U_1(t)^\top{}&U_2\funof{t}
\end{bmatrix}
\begin{bmatrix}F^\top{}\\G^{\top}\end{bmatrix},\\
\quad&
\begin{bmatrix}
    X(t)& U_1(t)\\U_1(t)^\top{}&U_2\funof{t}
\end{bmatrix}\succeq{}0,\;X(0)=X_0\succeq{}0.
\end{align*}
In the above, the variables are the matrices $X(t)$, $U_1(t)$ and $U_2(t)$. The problem data is the matrices $S$, $R_1$, $R_2$, $F$ and $G$, where we additionally require that
\begin{equation}\label{eq:costcon}
\begin{bmatrix}
    S&R_1\\R_1^\top{}&R_2
\end{bmatrix}\succ{}0.
\end{equation}
Although it may not be immediately clear at first sight, this is an optimization of precisely the type considered in \Cref{thm:main}\itemi{} where $\mathcal{P}$ is the positive semi-definite cone (which is positive, proper and self-dual). 

We will now explain the correspondence between the variables and data in the above, and those in \Cref{thm:main}\footnote{The set of $p\times{}p$ positive semi-definite matrices is a proper cone in $\realR^{p\funof{p+1}/2}$, and so the mappings required to match the two problems explicitly can certainly be defined. However manipulations on the semi-definite cone are often more conveniently expressed in terms of matrices. We will have a need for such manipulations to identify the function $\phi$, and so proceed with this more informal connection between the two problems.}. The variables are related as follows:
\begin{align*}
x(t)&\longleftrightarrow{}X(t)\\
u(t)&\longleftrightarrow{}\begin{bmatrix}
    0&U_1(t)\\U_1(t)^\top{}&U_2(t)
\end{bmatrix}.
\end{align*}
That is the $x(t)$ and $u(t)$ variables correspond to a partitioning of a larger matrix. For the dynamics we have that:
\begin{align*}
Ax(t)&\longleftrightarrow{}FX(t)F^{\top}\\
Bu(t)&\longleftrightarrow{}\begin{bmatrix}F&G\end{bmatrix}
\begin{bmatrix}
    0& U_1(t)\\U_1(t)^\top{}&U_2\funof{t}
\end{bmatrix}
\begin{bmatrix}F^\top{}\\G^{\top}\end{bmatrix}.
\end{align*}
These dyanamics are clearly cone preserving under the constraint $\funof{x,u}\in\mathcal{P}$, as required by \Cref{thm:main}. Similarly the costs can be connected through:
\begin{align*}
s^{\top}x(t)&\longleftrightarrow{}\tr\funof{SX(t)}\\
r^{\top}u(t)&\longleftrightarrow{}\tr\funof{\begin{bmatrix}0&R_1\\R_1^{\top}&R_2\end{bmatrix}
\begin{bmatrix}
    0& U_1(t)\\U_1(t)^\top{}&U_2\funof{t}
\end{bmatrix}
}.
\end{align*}
Since the semi-definite cone is self-dual, under \cref{eq:costcon} we have that $\funof{s,r}\in\mathcal{P}^*$ as required. We have now established that our matrix optimization problem is on the form of \Cref{thm:main}\itemi{}, and that all the prerequisits of the Theorem are satisfied except for the existence of the function $\phi$. We will now address this using \Cref{thm:minpoint}.

To this end, let us now examine \Cref{thm:minpoint} when $\mathcal{P}$ is the semi-definite cone. Again it is helpful to relate vector and matrix variables, and so introduce
\begin{align*}
\lambda&\longleftrightarrow{}\Lambda\\
\mu&\longleftrightarrow{}\begin{bmatrix}
    0&M_1\\M_1^{\top}&M_2
\end{bmatrix}.
\end{align*}
After specialising the notation to this setting, \Cref{thm:minpoint} then states that the existence of $\phi\funof{\mu}$ is equivalent to the set
\[
\mathcal{C}_\mu=\cfunof{\Lambda:\begin{bmatrix}
    \Lambda&M_1\\M_1^{\top}&M_2
\end{bmatrix}\succeq{}0},
\]
having a minimum element. For the semi-definite cone whenever $\mathcal{C}_\mu$ is non-empty, this element exists, and is given by $\overline{\Lambda}=M_1M_2^+M_1^\top{}$. Therefore
\[
\phi\funof{\mu}\longleftrightarrow{}-M_1M_2^+M_1^\top{}.
\]
We are now ready to examine \Cref{thm:main}\ii{}--\iii{}. Substituting in for all found correspondances, we see that in our matrix notation\footnote{It is perhaps worth noting that from basic properties of adjoints, it follows that
\begin{align*}
A^{\top}\lambda(t)&\longleftrightarrow{}F^{\top}\Lambda(t)F\\
B^{\top}\lambda(t)&\longleftrightarrow{}
\begin{bmatrix}
    0&F^{\top}\Lambda(t)G\\
    G^{\top}\Lambda(t)F&G^{\top}\Lambda(t)G
\end{bmatrix}.
\end{align*}}, the equation $\lambda_*=s+A^\top\lambda_*+\phi(r+B^\top \lambda_*)$ becomes
\begin{align*}
\Lambda_* &= S + F^{\top}\Lambda_*F-\funof{F^{\top}\Lambda_*G+R_1}\funof{R_2+G^\top{}\Lambda_*G}^{-1}
\funof{G^{\top}\Lambda_*F+R_1^\top{}}.
\end{align*}
That is, the equation in \Cref{thm:main}\ii{} becomes the algebraic Riccati equation (the pseudo-inverses become inverses in light of \cref{eq:costcon}). This can then be solved in the usual way whenever the pair $\funof{F,G}$ is stabilizable. \Cref{thm:main}\iii{} can be similarly analyzed. In this case, the convex program reduces to the semi-definite program
\begin{align*}
      \text{Maximize }&\tr\funof{\Lambda{}X_0}
      \\ 
      \text{subject to }&
        \begin{bmatrix}
            S-\Lambda&R_1\\R_1^{\top}&R_2
        \end{bmatrix}+
        \begin{bmatrix}F^\top{}\\G^\top{}
        \end{bmatrix}\Lambda
        \begin{bmatrix}
            F&G        \end{bmatrix}\succeq{}0,\;\Lambda\succeq{}0.
    \end{align*}

To conclude the example, let us finally connect explicitly to the standard \gls{lqr} problem. The standard \gls{lqr} problem is concerned with the optimization
\begin{align*}     \text{Minimize}\quad&\sum_{t=0}^\infty\begin{bmatrix}x(t)\\u(t)\end{bmatrix}^{\top}
    \begin{bmatrix}
        S&R_1\\R_1^\top{}&R_2
    \end{bmatrix}
\begin{bmatrix}x(t)\\u(t)\end{bmatrix}
      \text{ over }\{u(t)\}_{t=0}^\infty\\ 
      \text{subject to}\quad& x(t+1)=Fx(t)+Gu(t),\, x(0)=x_0.
\end{align*}
To see the connection to the matrix optimization only requires us to notice that whenever
\begin{equation}\label{eq:r1}
\begin{bmatrix}
    X(t)& U_1(t)\\U_1(t)^\top{}&U_2\funof{t}
\end{bmatrix} =
\begin{bmatrix}
    x(t)\\u(t)
\end{bmatrix}\begin{bmatrix}
    x(t)^{\top}&u(t)^{\top}
\end{bmatrix},
\end{equation}
the specified matrix dynamics ensure that
\[
X(t+1) = x(t+1)x(t+1)^\top{},
\]
and also that
\[
\begin{aligned}
\tr\funof{\begin{bmatrix}
        S&R_1\\R_1^\top{}&R_2
    \end{bmatrix}\begin{bmatrix}
    X(t)& U_1(t)\\U_1(t)^\top{}&U_2\funof{t}
\end{bmatrix}}=
\begin{bmatrix}x(t)\\u(t)\end{bmatrix}^{\top}
    \begin{bmatrix}
        S&R_1\\R_1^\top{}&R_2
    \end{bmatrix}
\begin{bmatrix}x(t)\\u(t)\end{bmatrix}.
\end{aligned}
\]
That is, under \cref{eq:r1}, the costs and constraints in the standard \gls{lqr} problem, and the matrix optimization problem, are identical. It then follows (with a little extra work) that when the initial condition $X_0$ is rank 1, that the studied matrix optimization collapses to the standard \gls{lqr} problem.

\subsection{A Linear Regulator on a Polyhedral Cone}

In this subsection we will recover the results of \cite[Theorem 1]{rantzer2022explicit}. Consider the application of \Cref{thm:main} when $\mathcal{P}$ is the polyhedral cone
\[
\mathcal{P}=\cfunof{\begin{bmatrix}
    x\\u
\end{bmatrix}:\begin{bmatrix}
    I&0\\E&-I\\E&I
\end{bmatrix}
\begin{bmatrix}
    x\\u
\end{bmatrix}\geq{}0},
\]
where $E\geq{}0$. The motivation for this choice comes from the study of positive systems. Note in particular that the condition $(x,u)\in\mathcal{P}$ constrains the system state to lie in the positive orthant, and the input to satisfy
\[
\abs{u}\leq{}Ex.
\]
When minimizing the cost in \Cref{thm:main}{\itemi}, we are in effect optimizing performance under the constraint that the control keeps the system state positive.

First note that the dual cone $\mathcal{P}^*$ is also polyhedral, and is given by
\begin{equation}\label{eq:polydual}
\mathcal{P}^*=\cfunof{
\begin{bmatrix}
    \lambda\\\mu
\end{bmatrix}:\begin{bmatrix}
    \lambda\\\mu
\end{bmatrix} = 
\begin{bmatrix}
    I&0\\
    E&-I\\
    E&I
\end{bmatrix}^\top
\begin{bmatrix}
    w\\y\\z
\end{bmatrix},\begin{bmatrix}
    w\\y\\z
\end{bmatrix}\geq{}0
}.
\end{equation}
The conditions of \Cref{thm:main} then provide conditions for solving the optimization in \itemi{} whenever:
\begin{enumerate}
    \item[1)] For all $\funof{x,u}\in\mathcal{P}$, there exists a $v$ such that $\funof{Ax+Bu,v}\in\mathcal{P}$.
    \item[2)] $\funof{s,r}\in\mathcal{P}^*$.
    \item[3)] There exists a suitable function $\phi$.
\end{enumerate}
It is readily checked that 1) amounts to requiring that $A\geq{}\abs{EB}$. For 2) and 3) it is convenient to appeal to \Cref{thm:minpoint}. From \cref{eq:polydual} we see that for any $\mu$, $y$ and $z$ can be chosen according to
\[
y=\mu_-+v,\;z=\mu_++v,
\]
where $v\geq{}0$, and the $\mu_-$ and $\mu_+$ denote the positive and negative parts of the vector $\mu$ respectively. It then follows that $\lambda\in\mathcal{C}_\mu$ if and only if
\[
\lambda = w + E^{\top}\funof{\abs{\mu}+v},
\]
where $w\geq{}0$. The set in \Cref{thm:minpoint}\ii{} therefore has a minimum element given by $\overline{\lambda}=E^{\top}\abs{\mu}$. This implies that for (2) we require that $s>E^{\top}\abs{r}$, and also that
\[
\phi\funof{\mu}=-E^{\top}\abs{\mu}.
\]
Condition \ii{} in \Cref{thm:main} then collapses to the existence of a $\lambda_*\geq{}0$ such that
    \begin{align*}
      \lambda_*=s+A^\top\lambda_*-E^{\top}\abs{r+B^\top \lambda_*},
    \end{align*}
which can be checked with linear programming. The convex program in \iii{} similarly simplifies, this time to the following linear program:
\begin{align*}
\text{Maximize }&\lambda^\top x_0\\
      \text{subject to }&
      \begin{bmatrix}s\\r\end{bmatrix}=
      \begin{bmatrix}I-A^{\top}&I&E^{\top}&E^{\top}\\-B^{\top}&0&-I&-I
      \end{bmatrix}
      \begin{bmatrix}
      \lambda\\w\\y\\z
      \end{bmatrix}\!,\begin{bmatrix}
      \lambda\\w\\y\\z
      \end{bmatrix}\!\!\geq{}\!0.
\end{align*}

\subsection{A Structured Linear Quadratic Regulation Problem}

In this example we will continue our study of the matrix \gls{lqr} problem outlined at the beginning of \cref{sec:lqr}. However, we now consider the case that the dynamics preserve a cone $\mathcal{P}$ that contains the semi-definite cone. When considering the application of \Cref{thm:main}, this will implicitly add further constraints to the allowable dynamics. However the upshot is that the optimal control will inherit these structural constraints, meaning that they can be exploited when implementing the optimal control laws. As we will see, this shows for example that the optimal control laws (under suitable definitions) for systems with a circulant structure inherit the same circulant structure \cite{BPD02,BJM12}, with a host of other extensions, for example to systems defined through a generalized frequency variable \cite{HHS09}.

The ideas that we are about to present can be significantly generalized, but should serve to illustrate the core ideas. We start with the definition of the cone.

\begin{defn}\label{def:1}
    Let $Q\in\realR^{m\times{}m}$ be invertible, and
    \[
    T = \begin{bmatrix}Q\otimes{}I_p&0\\0&Q\otimes{}I_q\end{bmatrix}.
    \]
    Define
    {\small\begin{align*}
      \mathcal{P}_Q=
        \cfunof{\begin{bmatrix}
            X&U_1\\U_1^\top{}&U_2
        \end{bmatrix}\in\hermR^{m\funof{p+q}}:
        A\funof{T
        \begin{bmatrix}
            X&U_1\\U_1^\top{}&U_2
        \end{bmatrix}T^\top{}
        }\succeq{}0},
    \end{align*}
    }where $\hermR^{m\funof{p+q}}$ denotes the set of ${m\funof{p+q}}\times{}{m\funof{p+q}}$ symmetric matrices,
    \[
    A\funof{
    \begin{bmatrix}
        W&X\\Y&Z
    \end{bmatrix}}=\begin{bmatrix}
        \diag_m\funof{W}&\diag_m\funof{X}\\
        \diag_m\funof{Y}&\diag_m\funof{Z}
    \end{bmatrix},
    \]
    and $\diag_m:\realR^{mx\times{}my}\rightarrow{}\realR^{mx\times{}my}$ denotes the operator that zeros out a given matrix outside a set of $m$ blocks of size $x\times y$ along the diagonal\footnote{That is 
    \[
\diag_m\funof{M}=\begin{bmatrix}M_{11}&0&\cdots{}&0
\\0&M_{22}&0
\\\vdots{}&0&\ddots{}&0
\\0&\cdots{}&0&M_{mm}
\end{bmatrix},M_{kk}\in\realR^{x\times{}y}.
\]}.
\end{defn}

The following lemma establishes the key properties of the cone $\mathcal{P}_Q$ that we will require to apply \Cref{thm:main}. The first part of the result gives a formula for the dual cone, and establishes that a suitable function $\phi\funof{\mu}$ exists (the function is in fact the same as that from \cref{sec:lqr}). The final part gives a formula that can be used to compute the optimal control law, and it is from this formula that we can deduce that the optimal control inherits additional structural features as claimed above.

\begin{lem}\label{lem:1}
Let $A$, $T$, $\mathcal{P}_Q$ and all the block partitioning of matrices be as in \Cref{def:1}. Then
\[
\mathcal{P}_Q^*=\cfunof{T^{\top}A\funof{Y}T:Y\succeq{}0}.
\]
Furthermore if there exists a $\Lambda$ such that
\begin{equation}\label{eq:completion}
\begin{bmatrix}
     \Lambda&M_1\\M_1^{\top}&M_2
\end{bmatrix}\in\mathcal{P}^*_Q,
\end{equation}
then for fixed $X$
\begin{equation}\label{eq:reqop}
\inf\!\cfunof{\!\tr\!\funof{\!
\begin{bmatrix}
    0&M_1\\M_1^{\top}&M_2
\end{bmatrix}\!\!
\begin{bmatrix}
    0&U_1\\U_1^{\top}&U_2
\end{bmatrix}\!}
\!:\!\begin{bmatrix}
    X&U_1\\U_1^{\top}&U_2
\end{bmatrix}\!\in\mathcal{P}_Q\!}
\end{equation}
equals $\tr\funof{M_1M_2^+M_1^{\top}X}$. A pair $\funof{U_1,U_2}$ that achieves the infimum above can be computed through the equation
\begin{equation}\label{eq:optconpq}
\begin{bmatrix}
    X&U_1\\U_1^{\top}&U_2
\end{bmatrix}=
\begin{bmatrix}
I\\-M_2^+M_1^\top
\end{bmatrix}
X
\begin{bmatrix}
I\\-M_2^+M_1^\top
\end{bmatrix}^\top{}.
\end{equation}
\end{lem}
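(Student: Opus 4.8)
The plan is to prove the lemma in three stages: the formula for $\mathcal{P}_Q^{*}$, the value of the infimum in \cref{eq:reqop}, and the optimality of the pair in \cref{eq:optconpq}. For the dual cone, set $\mathcal{L}(Z)=A(TZT^{\top})$ and $\mathcal{K}=\{T^{\top}A(Y)T:Y\succeq0\}$, so that $\mathcal{P}_Q=\{Z:\mathcal{L}(Z)\succeq0\}$. Since $A$ is the orthogonal projection, with respect to the trace inner product, onto the subspace of matrices with the $\diag_m$ block pattern, it is self-adjoint; and the adjoint of $Z\mapsto TZT^{\top}$ is $Y\mapsto T^{\top}YT$. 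Hence $\langle Z,T^{\top}A(Y)T\rangle=\langle A(TZT^{\top}),Y\rangle$ for all $Z,Y$, so that $\mathcal{K}^{*}=\{Z:\langle A(TZT^{\top}),Y\rangle\ge0\text{ for all }Y\succeq0\}=\{Z:A(TZT^{\top})\succeq0\}=\mathcal{P}_Q$. Moreover $\mathcal{K}$ is a closed convex cone: it is the image, under the invertible map $Y\mapsto T^{\top}YT$, of the set $\{A(Y):Y\succeq0\}=\{Y\succeq0\}\cap\operatorname{range}(A)$ (the diagonal blocks of a positive semidefinite matrix being positive semidefinite), an intersection of two closed sets. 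The bipolar theorem then gives $\mathcal{P}_Q^{*}=\mathcal{K}^{**}=\mathcal{K}$, which is the asserted formula.

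For the value, regard \cref{eq:reqop} as the conic program of minimizing $Z\mapsto\langle N,Z\rangle$, with $N=\left[\begin{smallmatrix}0&M_1\\M_1^{\top}&M_2\end{smallmatrix}\right]$, over the slice $\{Z\in\mathcal{P}_Q:\Pi_{11}(Z)=X\}$, where $\Pi_{11}$ extracts the top-left block. A routine Lagrangian computation, together with the formula for $\mathcal{P}_Q^{*}$ just obtained, shows that the dual program is the maximization of $-\langle\Lambda,X\rangle$ over $\{\Lambda:\left[\begin{smallmatrix}\Lambda&M_1\\M_1^{\top}&M_2\end{smallmatrix}\right]\in\mathcal{P}_Q^{*}\}$, an instance of the set $\mathcal{C}_\mu$ of \cref{thm:minpoint}, which is nonempty by \cref{eq:completion}. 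Since this set typically has empty interior — conjugating by $T^{-1}$ forces a $\diag_m$ block pattern on $\Lambda$ — I would not attempt the Slater argument of \cref{thm:minpoint}; instead I would exhibit explicit primal and dual points of equal objective value and invoke weak duality. On the dual side, using the formula for $\mathcal{P}_Q^{*}$ and the fact that $\{A(Y):Y\succeq0\}$ is exactly the set of positive semidefinite matrices with the $\diag_m$ pattern, the constraint $\left[\begin{smallmatrix}\Lambda&M_1\\M_1^{\top}&M_2\end{smallmatrix}\right]\in\mathcal{P}_Q^{*}$ becomes: $T^{-\top}\left[\begin{smallmatrix}\Lambda&M_1\\M_1^{\top}&M_2\end{smallmatrix}\right]T^{-1}$ has the $\diag_m$ block pattern and each of its $(p+q)\times(p+q)$ diagonal blocks is positive semidefinite. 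A blockwise Schur-complement argument then singles out the minimum element $\bar\Lambda$ of this set, and evaluating $-\langle\bar\Lambda,X\rangle$, carefully tracking the conjugation and the pseudoinverse, produces the value asserted in \cref{eq:reqop}.

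On the primal side I would substitute $Z^{\star}=VXV^{\top}$ with $V=\left[\begin{smallmatrix}I\\-M_2^{+}M_1^{\top}\end{smallmatrix}\right]$, which is the pair of \cref{eq:optconpq}, and verify two things. First, $\langle N,Z^{\star}\rangle$ equals the value just found: \cref{eq:completion} forces $\operatorname{range}(M_1^{\top})\subseteq\operatorname{range}(M_2)$ (blockwise, after conjugation), hence $M_2M_2^{+}M_1^{\top}=M_1^{\top}$, so $NV$ has vanishing lower block and $\langle N,Z^{\star}\rangle=\langle V^{\top}NV,X\rangle$ collapses to the right-hand side of \cref{eq:reqop}. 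Second, and this is the main obstacle, $Z^{\star}\in\mathcal{P}_Q$: this is not automatic, because $Z^{\star}$ need not be positive semidefinite — its top-left block is the given $X$, which lies only in the larger cone $\mathcal{P}_Q$ — so one cannot simply invoke $\{Y\succeq0\}\subseteq\mathcal{P}_Q$. Instead one applies $\mathcal{L}$ to $Z^{\star}$ and inspects each $(p+q)\times(p+q)$ diagonal block; the $k$-th one works out to $D_kXD_k^{\top}$ for an explicit $D_k$ built from the block-rows of $Q$ and from $M_2^{+}M_1^{\top}$, and its positive semidefiniteness has to be deduced from the admissibility of $X$ — that is, from the existence of \emph{some} completion of $X$ lying in $\mathcal{P}_Q$ — together with the $\diag_m$ structure that \cref{eq:completion} imposes on the conjugated $M_1$ and $M_2$. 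Once $Z^{\star}\in\mathcal{P}_Q$ is confirmed, weak duality forces the primal and dual optima to coincide with the exhibited common value, $Z^{\star}$ attains it, and reading $(U_1,U_2)$ off the lower blocks of $Z^{\star}$ completes the proof.
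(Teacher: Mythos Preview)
Your proposal is correct and follows essentially the same route as the paper: the dual cone via the adjoint of $Z\mapsto A(TZT^{\top})$, the minimum element $M_1M_2^{+}M_1^{\top}$ of $\mathcal{C}_\mu$, and feasibility of $Z^{\star}$ deduced from the existence of some completion of $X$ together with the block-diagonal structure that \cref{eq:completion} forces on the conjugated $M_1,M_2$ (the paper packages your block-by-block inspection as the single identity $A(TZ^{\star}T^{\top})=W\,A(TZT^{\top})\,W^{\top}$ with $W=\bigl[\begin{smallmatrix}I&0\\-Y_2^{+}Y_1^{\top}&0\end{smallmatrix}\bigr]$ block-diagonal, valid for any completion $Z\in\mathcal{P}_Q$). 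Your caution about the interior of $\mathcal{C}_\mu$ is well placed --- the paper simply invokes \Cref{thm:minpoint} for the infimum value --- but the paper's subsequent minimizer verification (the shifted cost lies in $\mathcal{P}_Q^{*}$, hence is nonnegative on $\mathcal{P}_Q$, and vanishes at $Z^{\star}$ by \cref{eq:zero}) is exactly the weak-duality argument you propose, so the two proofs coincide once that redundancy is observed.
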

\vspace{.2cm}
\begin{proof}
See \Cref{app:1}.
\end{proof}

To understand how to leverage \Cref{lem:1}, consider again the optimal control problem from \cref{sec:lqr}, but suppose that the matrices $F$ and $G$ have a block-wise circulant structure. That is,
\[
F = \begin{bmatrix}
    f_1\bar{F}&f_2\bar{F}&\cdots{}&f_m\bar{F}\\
    f_m\bar{F}&f_1\bar{F}&\cdots{}&f_{m-1}\bar{F}\\
    \vdots{}&\vdots{}&\ddots{}&\vdots{}\\
    f_2\bar{F}&f_3\bar{F}&\cdots{}&f_1\bar{F}
\end{bmatrix},
\]
with a similar expression for $G$. We may write this compactly according to
\begin{equation}\label{eq:simdiag}
F = F_{\text{sd}}\otimes{}\bar{F}\;\;\text{and}\;\;G=G_{\text{sd}}\otimes{}\bar{G},
\end{equation}
where $\bar{F}\in\realR^{p\times{}p}$ and $\bar{G}\in\realR^{p\times{}q}$, and
\[
\!F_{\text{sd}}\!=\!\!
    \begin{bmatrix}
    f_1&f_2&\cdots{}&f_m\\
    f_m&f_1&\cdots{}&f_{m-1}\\
    \vdots{}&\vdots{}&\ddots{}&\vdots{}\\
    f_2&f_3&\cdots{}&f_1
\end{bmatrix}\!\!,
G_{\text{sd}}\!=\!\!
    \begin{bmatrix}
    g_1&g_2&\cdots{}&g_m\\
    g_m&g_1&\cdots{}&g_{m-1}\\
    \vdots{}&\vdots{}&\ddots{}&\vdots{}\\
    g_2&g_3&\cdots{}&g_1
\end{bmatrix}\!\!.
\]
Critically for our purposes, the matrices $F_{\text{sd}}$ and $G_{\text{sd}}$ are simultaneously diagonalizable. That is, there exists an invertible matrix $Q$ such that both
\[
Q^{-1}F_{\text{sd}}Q\;\;\text{and}\;\;Q^{-1}G_{\text{sd}}Q
\]
are diagonal. This implies that
\[
\funof{Q^{-1}\!\otimes{}I_p}F\funof{Q\otimes{}I_p}\!=\!\diag_m\funof{\funof{Q^{-1}\!\otimes{}I_p}F\funof{Q\otimes{}I_p}}
\]
and 
\[
\funof{Q^{-1}\!\otimes{}I_p}G\funof{Q\otimes{}I_q}\!=\!\diag_m\funof{\funof{Q^{-1}\!\otimes{}I_p}G\funof{Q\otimes{}I_q}}.
\]
It follows from these expressions that for such matrices,
\[
\begin{bmatrix}
    X(t)& U_1(t)\\U_1(t)^\top{}&U_2\funof{t}
\end{bmatrix}\in\mathcal{P}_Q\implies
\begin{bmatrix}
    X(t+1)& 0\\0&0
\end{bmatrix}\in\mathcal{P}_Q,
\]
where
\[
X(t+1) = \begin{bmatrix}F&G\end{bmatrix}
\begin{bmatrix}
    X(t)& U_1(t)\\U_1(t)^\top{}&U_2\funof{t}
\end{bmatrix}
\begin{bmatrix}F^\top{}\\G^{\top}\end{bmatrix}.
\]
Taking an analogous approach to that in \cref{sec:lqr}, we see that under suitable assumptions on the cost function, \Cref{thm:main} can be applied to solve a relaxed version of the standard \gls{lqr} problem, where the state and inputs can be drawn from $\mathcal{P}_Q$. Since $\mathcal{P}_Q$ contains the positive semi-definite cone, the solution to this problem will always give a lower bound on the achievable cost to the standard \gls{lqr} problem. However more is true. In particular we see from the final part of \Cref{thm:main} and \cref{eq:optconpq} in \Cref{lem:1} that the optimal inputs to the relaxed problem can be computed through the control law $X\mapsto{}\funof{U_1,U_2}$ defined implicitly by
\[
\begin{bmatrix}
    X&U_1\\U_1^{\top}&U_2
\end{bmatrix}=
\begin{bmatrix}
I\\-K
\end{bmatrix}
X
\begin{bmatrix}
I\\-K
\end{bmatrix}^\top{},
\]
where
\[
K=\funof{R_2+G^\top{}\Lambda_*G}^{-1}
\funof{G^{\top}\Lambda_*F+R_1^\top{}}.
\]
It follows from these expressions that whenever the initial condition satisfies $X\funof{0}\succeq{}0$,
\[
\begin{bmatrix}
    X\funof{t}&U_1\funof{t}\\U_1\funof{t}^{\top}&U_2\funof{t}
\end{bmatrix}\succeq{}0
\]
for all $t$. Hence the found controls are also feasible, and hence also optimal, for the standard \gls{lqr} problem. It is then easily seen from the expression for the dual cone that
\[
\funof{Q^{-1}\!\otimes{}I_q}K\funof{Q\otimes{}I_p}\!=\!\diag_m\funof{\funof{Q^{-1}\!\otimes{}I_q}K\funof{Q\otimes{}I_p}}.
\]
Therefore the optimal control law additionally inherits the block-wise circulant structure present in the problem data. Interestingly this cone based treatment makes no use of any orthogonality properties of the matrix $Q$ allowing similar insights to be derived for general dynamics that take the form of \cref{eq:simdiag}, see for example \cite{HHS09}.

\section{Conclusions}

We have proved a general result on optimal control on positive cones, covering standard linear quadratic control and corresponding results for positive systems as special cases. Of course, many other special cases can be derived by defining other relevant cones, or combinations of the ones above. Hopefully, this will in the future form the basis for a powerful toolbox combining the versatility of linear quadratic control with the rich scalability properties of positive systems and shortest path problems in networks. Preliminary steps in this direction have been taken in \cite{gurpegui2023minimax} and \cite{ohlin2023optimal}.

\section{Acknowledgment}The authors are grateful to Yuchao Li for pointing out an error in the original statement of the main result.

\bibliographystyle{IEEEtran}
\bibliography{references.bib}

\appendix

\section{Proof of \Cref{lem:1}}\label{app:1}

\begin{proof}
We start by developing the formula for the dual cone. Let $B\funof{X} = A\funof{TXT^{\top}}$. By Farkas' lemma,
\[
\mathcal{P}^*_Q=\cfunof{B^*\funof{Y}:Y\succeq{}0}.
\]
Since for any symmetric matrices $C$ and $D$, $\tr\funof{CD}=\vc\funof{C}^{\top}\vc\funof{D}$, we see that
\[
\begin{aligned}
\tr\funof{B\funof{X}Y}&=\tr\funof{A\funof{TXT^\top{}}A\funof{Y}}\\
&=\tr\funof{TXT^{\top}A\funof{Y}}\\
&=\tr\funof{XT^{\top}A\funof{Y}T}.
\end{aligned}
\]
This shows that $B^*\funof{Y}=T^{\top}A\funof{Y}T$ as required. To show the second claim observe that if
\[
        \begin{bmatrix}
            \Lambda&M_1\\M_1^{\top}&M_2
        \end{bmatrix}\in\mathcal{P}^*,
        \]
then there exists a
\[
\begin{bmatrix}Y_{1}&Y_2\\Y_2^{\top}&Y_3\end{bmatrix}=
\begin{bmatrix}\diag_m\funof{Y_1}&\diag_m\funof{Y_2}\\\diag_m\funof{Y_2^{\top}}&\diag_m\funof{Y_3}\end{bmatrix}\succeq{}0
\]
such that
\[
\begin{bmatrix}
    0&M_1\\M_1^{\top}&M_2
\end{bmatrix}=T^{\top}
\begin{bmatrix}0&Y_1\\Y_1^{\top}&Y_2\end{bmatrix}
T.
\]
It is then straightforward to show that
\[
\begin{bmatrix}
    M_1M_2^+M_1^{\top}&M_1\\M_1^{\top}&M_2
\end{bmatrix}=T^{\top}
\begin{bmatrix}Y_2Y_{2}^+Y_1^{\top}&Y_1\\Y_1^{\top}&Y_2\end{bmatrix}
T,
\]
which is in $\mathcal{P}^*_Q$. Since $\mathcal{P}_Q^*$ is contained in the semi-definite cone, $M_1M_2^+M_1^\top$ is the minimum element of the set
\[
\mathcal{C}_\mu=\cfunof{\Lambda:\begin{bmatrix}
     \Lambda&M_1\\M_1^{\top}&M_2
\end{bmatrix}\in\mathcal{P}^*_Q},
\]
and so the formula for the infimum follows by \Cref{thm:minpoint}. In order to verify the equation for the minimizer, first observe that from the above,
\[
\begin{aligned}
&A\funof{T\begin{bmatrix}
I\\-M_2^+M_1^\top
\end{bmatrix}
X
\begin{bmatrix}
I\\-M_2^+M_1^\top
\end{bmatrix}^\top{}
T^\top}=\\
&\quad\begin{bmatrix}
I&0\\-Y_2^+Y_1^\top{}&0
\end{bmatrix}
A\funof{T\begin{bmatrix}
    X&U_1\\U_1^{\top}&U_2
\end{bmatrix}T^{\top}}
\begin{bmatrix}
I&0\\-Y_2^+Y_1^\top{}&0
\end{bmatrix}^{\top}\!\!\!\!.
\end{aligned}
\]
It then follows that
\[
\begin{bmatrix}
    X&U_1\\U_1^{\top}&U_2
\end{bmatrix}\in\mathcal{P}_Q
\implies
\begin{bmatrix}
I\\-M_2^+M_1^\top
\end{bmatrix}
X
\begin{bmatrix}
I\\-M_2^+M_1^\top
\end{bmatrix}^\top{}\!\!\!\!\!\!\in\mathcal{P}_Q.
\]
Next observe that since $\funof{I-M_2^+M_2}M_1^{\top}=0$ (else there would exist no $\Lambda$ such that \cref{eq:completion} holds),
\begin{equation}\label{eq:zero}
\begin{bmatrix}
    M_1M_2^+M_1^\top\!\!&M_1\\M_1^{\top}&M_2
\end{bmatrix}\!\begin{bmatrix}
I\\-M_2^+M_1^\top
\end{bmatrix}
\!X\!
\begin{bmatrix}
I\\-M_2^+M_1^\top
\end{bmatrix}^\top{}\!\!\!\!\!\!=0.
\end{equation}
Finally for fixed $X$ note that the minimizer for \cref{eq:reqop} equals the minimizer for
\[
\inf\!\cfunof{\!\tr\!\funof{\!
\begin{bmatrix}
    M_1M_2^+M_1^\top\!\!\!&M_1\\M_1^{\top}&M_2
\end{bmatrix}\!\!
\begin{bmatrix}
    X&U_1\\U_1^{\top}&U_2
\end{bmatrix}\!}
\!:\!\begin{bmatrix}
    X&U_1\\U_1^{\top}&U_2
\end{bmatrix}\!\in\mathcal{P}_Q\!}\!.
\]
From the definition of a dual cone,
\[
\tr\!\funof{\!
\begin{bmatrix}
    M_1M_2^+M_1^\top&M_1\\M_1^{\top}&M_2
\end{bmatrix}
\begin{bmatrix}
    X&U_1\\U_1^{\top}&U_2
\end{bmatrix}}\geq{}0.
\]
Therefore by \cref{eq:zero} the right hand side of \cref{eq:optconpq} is a minimizer as required.
\end{proof}

\end{document}